\documentclass[10pt]{article}
\usepackage{amssymb,cite}
\usepackage{amsmath}
\usepackage{epsfig}
\usepackage{graphicx}
\usepackage[all]{xy}
\usepackage{color}
\usepackage{enumitem} 
\usepackage{kotex}
\usepackage[usenames,dvipsnames]{xcolor}
\setlength{\textwidth}{6.5in}
\setlength{\topmargin}{0pt}
\setlength{\voffset}{-.25in}
\setlength{\evensidemargin}{0pt}
\setlength{\oddsidemargin}{0pt}
\setlength{\hoffset}{0pt}
\setlength{\textheight}{600pt}
\usepackage[color=green!40]{todonotes}
\parindent=0pt                  % Do not indent paragraphs
\parskip=0.15 true in
\usepackage{setspace}
\usepackage{authblk,amsthm,amsfonts,amssymb,epsfig,graphics,amsmath,amsbsy,subfigure}%,showkeys}

\begin{document}
	\def\mathbi#1{\textbf{\em #1}}
	\newcommand{\intL}{\int\limits}
	\newcommand{\half}{^\infty_0 }
	\newcommand{\intR}{\int\limits_{\mathbb{R}} }
	\newcommand{\intRR}{\int\limits_{\mathbb{R}^2} }
	\newcommand{\RR}{\mathbb{R}}
	\newcommand{\xx}{\mathbf{x}}
	\newcommand{\XX}{\mathbf{X}}
	\newcommand{\yy}{\mathbf{y}}
	\newcommand{\YY}{\mathbf{Y}}
	\newcommand{\zz}{\mathbf{z}}
	\newcommand{\uu}{\mathbf{u}}
	\newcommand{\xb}{\textbf{\textit{x}}} 
	\newcommand{\yb}{\textbf{\textit{y}}} 
	\newcommand{\zb}{\textbf{\textit{z}}}
	\newcommand{\ub}{\textbf{\textit{u}}}
	\newcommand{\ttheta}{\boldsymbol{\theta}}
	\newcommand{\pphi}{\boldsymbol{\phi}}
	\newcommand{\oomega}{\boldsymbol{\omega}}
	\newcommand{\eeta}{\boldsymbol{\eta}}
	\newcommand{\zzeta}{\boldsymbol{\zeta}}
	\newcommand{\cchi}{\boldsymbol{\chi}}
	\newcommand{\xxi}{{\boldsymbol{\xi}}}
	\newcommand{\rmd}[1]{\mathrm d#1}
	\newtheorem{thm}{Theorem}
	\newtheorem{defi}[thm]{Definition}
	\newtheorem{rmk}[thm]{Remark}
	\newtheorem{smy}[thm]{Summary}
	\newtheorem{cor}[thm]{Corollary}
	\newtheorem{lem}[thm]{Lemma}
	\newtheorem{prop}[thm]{Proposition}
	
	%\modulolinenumbers[5]
	
	%\journal{Applied Mathematics and Computation}
	%\bibliographystyle{elsarticle-num}
	\title{Reconstruction of the initial function from the solution of the fractional wave equation measured in two geometric settings}
	%\author{Hyungyeong Jung and Sunghwan Moon}
	\author[a]{Hyungyeong Jung}
	\author[b,*]{Sunghwan Moon}
	\affil[a]{\small{School of Mathematics, Kyungpook National University, Daegu 41566, Republic of Korea} }
	\affil[b]{\small{Department of Mathematics, College of Natural Sciences, Kyungpook National University, Daegu 41566, Republic of Korea\\
	sunghwan.moon@knu.ac.kr} }
\date{}
	%\date{ {Department of Mathematics, College of Natural Sciences \\ [0.1em] \normalsize \small{Kyungpook National University, Daegu 41566, Republic of Korea\\[-0.1em]\normalsize {\tt sunghwan.moon@knu.ac.kr}}}
	%}
	\maketitle
	%%
	%%%\begin{frontmatter}
	\begin{abstract}
		Photoacoustic tomography (PAT) is a novel and rapidly promising technique in the field of medical imaging, based on the generation of acoustic waves inside an object of interest by stimulating non-ionizing laser pulses. This acoustic wave is measured using the detector on the outside of the object and converted into an image of the human body by several inversions. Thus, one of mathematical problems in PAT is how to recover the initial function from the solution of the wave equation on the outside of the object. In this study we consider the fractional wave equation and assume that the point-like detectors are located on the sphere and hyperplane. We provide how to recover the initial function from the data, the solution of the fractional wave equation, measured on the sphere and hyperplane.\\

\noindent 	Keywords: \texttt{Photoacoustic, tomography, wave equation, fractional derivative}\\
			MSC 2020: 35L05; 35R30 
	\end{abstract}
	%% 
	%% \begin{keyword}

	%\MSC[2010] 44A12\sep 92C55
	%%%\end{keyword}
	%%%\end{frontmatter}
	%%%\linenumbers
	%%
	%%%%%%%%%%%%%%%%%%%%%%%%%%%%%%%%%%%
	
	\section{Introduction}
    %When we have a disease in our body, it is very important to find and treat it accurately. However, it was not easy to diagnose the disease before it worsened because it did not appear well on the outside. However, with the development of medical imaging technology, not only diseases that have already progressed, but also risk factors that may occur in the future can be detected in advance. Medical devices used for diagnosis of a disease include X-ray, Computed Tomography (CT), Magnetic Resonance Imaging (MRI), Optical Imaging, and Ultrasound. These various medical devices have different advantages and disadvantages, so they often play a complementary role in diagnosing specific diseases, such as cancer diagnosis. For this reason, research on medical fusion imaging technology is currently being actively conducted to maximize strengths while supplementing each other's shortcomings by combining various medical imaging technologies. Photoacoustic imaging (PAI) is an example of such medical fusion imaging technology \cite{jang14}.
	
	%우리 몸에 병이 생겼을때 그것을 정확하게 찾아내고 치료하는 것이 매우 중요합니다. 하지만 몸 속의 병은 겉으로 잘 나타나지 않기 때문에 병이 악화되기 전에 진단하기가 쉽지 않았습니다. 그러나 의료영상 기술이 발달함에 따라, 이미 진행된 병뿐만 아니라 앞으로 발생 할 수 있는 위험 요인까지도 사전에 찾아낼 수 있게 되었습니다. 질병 진단에 사용되는 의료기기로는 X선 촬영(X-ray), 컴퓨터단층촬영(CT), 자기공명영상(MRI), 광학영상, 그리고 초음파(ultrasound) 등이 있습니다. 이런 다양한 의료기기는 서로 다른 장단점을 가지고 있어 암 진단과 같이 특정 질병 진단에 상보적인 역할을 수행하는 경우가 많습니다. {\color{blue}이러한 이유로 현재 여러 의료영상기술을 융합하여 서로의 단점을 보완하면서 장점을 극대화시키기 위한 의료융합영상기술에 관한 연구가 활발히 진행되고 있습니다.} 광음향 영상(PAI)은 이러한 의료융합영상기술의 한 예가 됩니다\cite{jang14}. 
	
    PhotoAcoustic Imaging (PAI) is a new biomedical imaging modality that secures the advantages of each while complementing problems of optical and ultrasound imaging. It is a hybrid technology that combines the high-contrast and spectroscopic-based specificity of optical imaging with the high spatial resolution of ultrasound imaging \cite{beard11,jiang11}. PAI uses on photoacoustic effects to form images of biological tissues without tissue damage. The photoacoustic effect, discovered by Alexander Graham Bell in 1880, refers to the generation of acoustic waves using thermal expansion by absorbing electromagnetic waves such as light or radio wave \cite{bayer12,bell80}.
    
    %광음향 영상은 광학 및 초음파 영상의 문제점을 보완하면서 각각의 장점을 확보할 수 있는 새로운 생체의학 영상 기법입니다 \cite{beard11}. 즉, 광학 이미징의 고대비 및 분광 기반 특수성과 초음파 이미징의 높은 공간 해상도를 결합한 하이브리드 기술입니다 \cite{jiang11}. PAI는 광음향 효과를 기반으로 조직의 손상 없이 생물학적 조직의 이미지를 형성합니다. 여기서 광음향 효과는 1880년 Alexander Graham Bell에 의하여 발견된 것으로 어떤 물질이 광이나 라디오파 같은 전자기파를 흡수하여 열팽창(thermal expansion)을 통해 음향파(acoustic wave)를 발생시키는 것을 말합니다 \cite{bayer12}. 
	
	Photoacoustic tomography (PAT) is a PAI system such that, a non-ionizing pulse wave with a strong intensity and very short irradiation time is irradiated to the tissue for diagnosis to obtain a photoacoustic signal in the ultrasound range (several MHz to several tens of MHz). The photoacoustic signal is an acoustic signal generated during thermal expansion, produced by irradiating a laser to the tissue and absorbing the irradiated laser energy by the tissue. Therefore, the generated Photoacoustic signal is received using an ultrasonic detector and an image is formed through some inversions with the received signal.
	
	%광음향 단층촬영(PAT)은 광음향 영상 시스템 중 하나입니다. PAT에서 진단을 원하는 조직에 비전리파(non-ionizing wave)이며 강한 세기이지만 조사 시간이 매우 짧은 펄스파(pulse wave)를 조사하여 초음파 영역대(수 MHz에서 수십 MHz)의 광음향 신호를 얻어냅니다. 광음향 신호는 조직에 레이저를 조사하여 조직이 조사된 레이저 에너지를 흡수함으로써 발생하는 열팽창(thermal expansion)과정에서 생성된 음향신호이다. 그러므로 발생된 광음향 신호는 초음파 감지기를 이용하여 수신합니다. 수신된 신호에 역변환을 통해 영상을 형성하게 됩니다.
	
	%광음향 단층촬영(PAT)은 광음향 영상 시스템 중 하나입니다. PAT의 원리를 간략하게 설명하겠습니다. 비 이온화 레이저 펄스를 우리가 촬영하고자 하는 물체에 쏩니다. 그 물체에 전달된 에너지 중 일부는 흡수되고 열로 변환되어 일시적인 열 탄성 팽창(thermo-elastic expansion)을 일으킵니다. 그 결과 넓은 대역의 주파수를 갖는 초음파로 방출되고 이를 여러 방향에서 초음파 탐지기로 감지합니다. 이 감지된 이미지를 역 변환을 통해 우리가 원하는 생체 조직을 영상으로 재구성합니다\cite{song11}. 현대에는 레이저 기술의 발전으로 광음향 효과를 이용한 질병 진단(유방암 진단\cite{ku05}과 혈관 질환 \cite{wang03} 등) 및 치료 기술이 활발히 연구되고 있습니다. 작은 동물의 혈관 조직(마우스 뇌 등)에 대한 고해상도 이미징과 생체 조직의 산소포화도 측정에도 활용되고 있습니다\cite{wang06}.
 	
 	One of the mathematical problems arising in PAT is how to recover the initial function from the data measured on the outside of the object. 
 	Here the measurement data satisfy the wave equation because the signal is an ultrasonic wave. According to \cite[chapter 3.]{hilfer10}, solutions of fractional order differential equations better describe real-life situations compared with those of the corresponding integer-order differential equations.
	In this paper, we consider the initial value problem for the fractional wave equation:
	%PAT에서 발생하는 수학적 문제 중 하나는 물체의 외부에서 측정된 데이터에서 초기 함수를 복구하는 것입니다. 여기서 측정 데이터는 초음파이기 때문에 파동방정식을 만족합니다. \cite[chapter 3.]{hilfer10}에 따르면, 분수 차수 미분 방정식의 해는 해당 정수 차수 미분 방정식의 해보다 실제 상황을 더 잘 설명하는 것으로 밝혀졌습니다. 이 논문에서는 분수 파동 방정식의 초기값 문제를 고려합니다.
	\begin{equation}\label{eq:pdeofpat}
	\begin{array}{ll}
	D^{\alpha}_{t}p_{\alpha}(\xx,t)=-(-\Delta_{\xx})^{\frac{\alpha}{2}}p_{\alpha}(\xx,t) & (\xx,t)\in\RR^{n}\times[0,\infty),\ 1<\alpha\le2 \\
	p_{\alpha}(\xx,0)=f(\xx) & \xx\in\RR^n \\
	\partial_{t}p_{\alpha}(\xx,t)|_{t=0}=0 & \xx\in\RR^{n}
	\end{array}
	\end{equation}
	where $-(-\Delta_{\xx})^{\frac{\alpha}{2}}$ is the Riesz space-fractional derivative of order $\alpha$  defined below, and $D^{\alpha}_{t}$ is the Caputo time-fractional derivative of order $\alpha$,	
	\begin{equation*}
	(D^{\alpha}_{t} h)(t):=(I^{m-\alpha}h^{(m)})(t), \quad m-1<\alpha\le m,\ m\in\mathbb N,
	\end{equation*}
	$I^\alpha,$ $\alpha\ge 0$ is the Riemann-Liouville fractional integral
	\begin{equation*}
	(I^{\alpha} h)(t):=\left\{
	\begin{array}{ll}
	\dfrac{1}{\Gamma(\alpha)} \displaystyle\intL^{t}_0(t-\tau)^{\alpha-1}h(\tau){\rm d}\tau,\quad & \mbox{if} \ \alpha>0, \\
	h(t),\quad & \mbox{if} \ \alpha=0, 
	\end{array}
	\right. 
	\end{equation*}
	and $\Gamma(\cdot)$ is the gamma function. For $\alpha=m$, $m\in\mathbb N$, the Caputo fractional derivative coincides with the standard derivative of order $m$. 
	For a smooth function $f$ on $\RR^n$ with compact support, the Riesz fractional derivative\cite{saichev97, samko93} of order $\alpha$, $0\le\alpha$ is defined as %in terms of its Fourier transform as
	\begin{equation*}
	\mathcal F(-(-\Delta_{\xx})^{\frac\alpha2}f)(\xxi) := -|\xxi|^{\alpha}(\mathcal F f)(\xxi),
	\end{equation*}
	$\mathcal F$ is the Fourier transform of a function $f$ defined by 
	\begin{equation*}
	(\mathcal F f)(\xxi):=\intL_{\RR^{n}}f(\xx)e^{-\mathrm{i}\xx\cdot\xxi}{\rm d}\xx. 
	\end{equation*}
	The solution of the fractional wave equation (1) is  
	\begin{equation*}\label{eq:solpde}
	p_{\alpha}(\xx,t)=\frac{1}{(2\pi)^n}\intL_{\RR^{n}} E_\alpha(-t^\alpha |\xxi|^{\alpha})e^{\mathrm{i}\xxi\cdot\xx}\mathcal F f(\xxi){\rm d}\xxi.
	\end{equation*}
	Here 
	\begin{equation*}
	E_\alpha(z)=\sum^\infty_{k=0}\frac{z^k}{\Gamma(1+\alpha k)}, \quad\alpha>0, \ z \in \mathbb{C},
	\end{equation*}
	is the Mittag-Leffler function. From \cite[Lemma 2.23]{kilbasst06}
	\begin{equation*}
	D^\alpha_tE_\alpha(-t^\alpha |\xxi|^{\alpha} )=-|\xxi|^{\alpha}E_\alpha(-t^\alpha |\xxi|^{\alpha} )
	\end{equation*}
	and
	\begin{equation*}
	D_tE_\alpha(-t^\alpha|\xxi|^{\alpha} )|_{t=0}=0. 
	\end{equation*}
%	Thus we can check $p_{\alpha}(\xx,t)$ is the solution of the fractional wave equation (1). 
		Since $E_2(-z^2)=\cos(z)$ for $\alpha=2$, the solution $p_{\alpha}$ of \eqref{eq:pdeofpat} reduces to the solution of the wave equation.
	%the fractional wave equation (1) is reduced to the wave equation.
    Therefore, we focus on the case $1<\alpha<2$ since for the case $\alpha=2$ is well studied in many literatures \cite{bukhgeimk78,finchhr07,kunyansky12,kuchment14book,moon18,narayananr10,xuw05,zangerl19}. 
	%%%%%%%%%%%%%%%%%%%%%%%%%%%%%
	\section{Preliminary} 
	%%%%%%%%%%%%%%%%%%%%%%%%%%%%%%
	
	Here, we consider two geometries where point-like detectors are located namely, spherical and hyperplanar geometries. As their names imply, in each case,  detectors are located on the unit sphere and hyperplane, respectively (see Figure \ref{fig:geom}). Our goal is to reconstruct the initial function $f$ from the measurement data, i.e., the solution of \eqref{eq:pdeofpat} on two geometries. 
    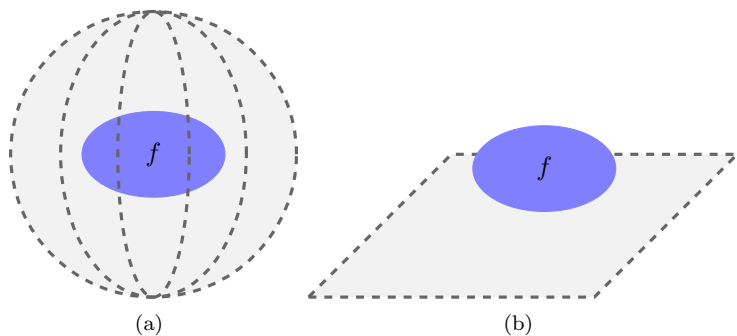
\begin{figure}[h]
    	\centering
    	\subfigure[]{\begin{tikzpicture}[scale=0.95]
    		\draw[dashed][color=black!60, fill=black!5, very thick](-1,0) circle (2);
    		\draw[dashed][color=black!60, very thick](-1,0) ellipse (1.3 and 2);
    		\draw[color=blue!50, fill=blue!50] (-1,0) ellipse (1 and 0.6);
    		\draw[dashed][color=black!60, very thick](-1,0) ellipse (0.5 and 2);
    		
    		\node at (-1,0) {$f$};
    		\end{tikzpicture}}
    	\subfigure[]{\begin{tikzpicture}[scale=0.95]
    		\draw[dashed][color=black!60, fill=black!5, very thick] (3,-2) -- (7,-2) -- (9,0) -- (5,0) -- cycle;
    		\draw[color=blue!50, fill=blue!50] (6.3,-0.2) ellipse (1 and 0.6);
    		
    		\node at (6.3,-0.2) {$f$};
    		\end{tikzpicture}}
    	\caption{PAT detection geometries in $\RR^{3}$. (a) spherical (b) planar.}  	
    	\label{fig:geom}
    \end{figure}

	In  spherical geometry, the solution $p_{\alpha}$ of (1) is measured on the unit sphere $\mathbb{S}^{n-1}$ in $\RR^{n}$. Let us the wave forward operator $\mathcal W_S$ defined by $\mathcal{W}_{S}f(\ttheta,t;\alpha)=p_{\alpha}(\ttheta,t),$ $(\ttheta,t)\in\mathbb{S}^{n-1}\times[0,\infty),$ where $f$ is an initial function of (1).  
	
	Similar to the spherical geometry, the solution $p_{\alpha}$ of (1) is measured on the hyperplane $\{\xx=(\boldsymbol{x}_{*},x_n)\in\RR^{n} : x_n=0, \ \boldsymbol{x}_{*} \in \RR^{n-1} \}.$ 
	Similarly, let us the wave forward operator $\mathcal{W}_H$  defined by  $\mathcal{W}_{H}f(\boldsymbol{u},t;\alpha)=p_{\alpha}(\boldsymbol{u},t)$, $(\boldsymbol{u},t) \in \RR^{n-1} \times [0,\infty),$ where $f$ is an initial function  of (1). 
	
	In both geometries, the Mellin transform is essential to  finding the initial function $f$ from measurement data. Moreover, spherical harmonics are employed in spherical geometry.
	The following subsections are devoted to introductions of the Mellin transform and spherical harmonics.
	
	%%%%%%%%%%%%%%%%%%%%%%%%%%%%%%%%%%%%%%%%%
	\subsection{The Mellin transform}
	%%%%%%%%%%%%%%%%%%%%%%%%%%%%%%%%%%
	
	Most of this subsection comes from \cite[p.79$\sim$90]{paris01}.
    Let $f$ be a locally integrable function defined on $(0,\infty).$ The Mellin transform of $f$ is defined by
	\begin{equation}\label{eq:mellin}
	\mathcal Mf(s):=\intL\half f(x)x^{s-1}{\rm d}x, \quad s \in \mathbb{C},
	\end{equation}
	when the integral converges. Suppose that 
	\begin{equation*}
	f(x)=O(x^{-a-\epsilon}) \ \  \mbox{as} \ \  x \rightarrow 0^{+} \quad \mbox{and} \quad f(x)=O(x^{-b+\epsilon}) \ \  \mbox{as} \ \ x \rightarrow \infty
	\end{equation*}
	where $O$ is the Big $O$ notation, $\epsilon>0$ and $a<b$,  integral \eqref{eq:mellin} converges absolutely and defines an analytic function in the strip $a<\mathrm{Re}(s)<b.$
	Furthermore, its inverse transform is given by
	\begin{equation*}\label{eq:inversemellin}
	f(x) = \mathcal M^{-1}(\mathcal Mf)(x) = \dfrac{1}{2\pi\mathrm{i}} \intL^{\gamma+\mathrm{i}\infty}_{\gamma-\mathrm{i}\infty} \mathcal Mf(s)x^{-s}{\rm d}s, \quad \mbox{for} \quad a<\gamma<b.
	\end{equation*}
    Then $f$ can be recovered from its Mellin transform $\mathcal Mf$ using the inverse Mellin transform.
	The Mellin transform satisfies the property:
	\begin{equation*}\label{eq:MfMg}
	\mathcal M(f\times g)(s)=\mathcal Mf(s)\mathcal Mg(s),
	\end{equation*}
	where the convolution is defined by
	\begin{equation}\label{eq:Mellinconvol}
	f \times g(x):=\intL\half f(\tau)g\left(\frac{x}{\tau}\right)\frac{{\rm d}\tau}{\tau}.
	\end{equation}
	
	%%%%%%%%%%%%%%%%%%%%%%%%%%%%%%%%%%%%%%%%%
	\subsection{Spherical harmonics}
	%%%%%%%%%%%%%%%%%%%%%%%%%%%%%%%%%%%%%%%%%
	
	Let $\mathbi{Y}_{lk}$ denote by the spherical harmonics\cite{natterer86,seeley66} that form a complete orthonormal system in $L^2(\mathbb{S}^{n-1})$. Then, $f$ can be expanded in the spherical harmonics as:
	\begin{equation*}
	f(r_{\xx}\ttheta_{\xx})=\displaystyle\sum^\infty_{l=0}\displaystyle\sum^{\mathbi{N(n,l)}}_{k=0}f_{lk}(r_{\xx})\mathbi{Y}_{lk}(\ttheta_{\xx}),\quad \mbox{for all} \quad f \in L^2(\RR^n), 
	\end{equation*}
	where $N(n,l)=(2l+n-2)(n+l-3)!/(l!(n-2)!)$ for $l\in\mathbb{N}$ and $N(n,0)=1$.
	Also, we use the spherical harmonics expansions of the measurement data $\mathcal W_{S}f(\ttheta,t;\alpha)$ and the Fourier transform $\mathcal{F}f(\xxi)$ of the initial function $f$ with $\xxi=\lambda_{\xxi}\oomega_{\xxi}$,
	\begin{equation}\label{eq:sheofWsf}
	\mathcal{W}_{S}f(\ttheta,t;\alpha) = \displaystyle\sum^\infty_{l=0} \displaystyle\sum^\mathbi{N(n,l)}_{k=0}(\mathcal{W}_{S}f)_{lk}(t;\alpha) \mathbi{Y}_{lk}(\ttheta), \quad \mbox{for all} \quad (t,\ttheta) \in [0,\infty) \times \mathbb{S}^{n-1}
	\end{equation}
	and
	\begin{equation*}\label{eq:sheoffourier}
	\mathcal{F}f(\lambda_{\xxi}\oomega_{\xxi})=\displaystyle\sum^\infty_{l=0}\displaystyle\sum^\mathbi{N(n,l)}_{k=0}(\mathcal{F}f)_{l k}(\lambda_{\xxi})\mathbi{Y}_{l k}(\oomega_{\xxi}),\quad \mbox{for all} \quad (\lambda_{\xxi},\oomega_{\xxi}) \in [0,\infty) \times \mathbb{S}^{n-1}.
	\end{equation*}	
	
	%%%%%%%%%%%%%%%%%%%%%%%%%%%%%%%%%%%%%%%%%
	\section{How to recover the initial function}
	%%%%%%%%%%%%%%%%%%%%%%%%%%%%%%%%%%%%%%%%%
	
	To recover the initial function, we assume that the point-like detectors are located on the unit sphere and hyperplane. Below we provide a method to recover the initial function $f$ from the solution of the fractional wave equation measured on two geometries. 
	%위에서 언급했듯이 우리는 점형 감지기가 단위 구와 초평면에 있다고 가정합니다. 아래에서는 PAT의 두 기하학에서 측정된 분수 파동 방정식의 해에서 초기 함수 f를 복구하는 방법을 제공합니다.
	
	%%%%%%%%%%%%%%%%%%%%%%%%%%%%%%%%%%%%%%%%%
	\subsection{Spherical geometry}\label{subsec:sphere}
	%%%%%%%%%%%%%%%%%%%%%%%%%%%%%%%%%%%%%%%%%
	
	This section provides answer  to recovering the initial function $f$ from $\mathcal{W}_{S}f.$ Here, the measurement data $\mathcal{W}_{S}f$ is given as  
	\begin{equation}\label{eq:solpdeWsf}
	\mathcal{W}_{S}f(\ttheta,t;\alpha)=\frac{1}{(2\pi)^n}\displaystyle\intL_{\RR^n} E_{\alpha}(-t^\alpha|\xxi|^{\alpha})e^{\mathrm{i}\xxi\cdot\ttheta}\mathcal{F} f(\xxi){\rm d}\xxi,\quad\mbox{for}\quad(\ttheta,t)\in\mathbb{S}^{n-1}\times[0,\infty).
	\end{equation}
%	However, it is not easy to obtain $f$ directly from $\mathcal W_{S}f$. Therefore, we recover $f$ through the process below. 
First,	we consider a relation between $(\mathcal{W}_{S}f)_{lk}$ and $(\mathcal{F}f)_{lk}$ first.
%he following lemma \ref{lem:Wsfcoeff} gives a relation between $(\mathcal{W}_{S}f)_{lk}$ and $(\mathcal{F}f)_{lk}$.
	\begin{lem}\label{lem:Wsfcoeff}
	For $f\in C^\infty(\RR^n)$ with compact support, we have
		\begin{equation}\label{eq:Wsfcoeff}
		(\mathcal{W}_{S}f)_{lk}(t;\alpha)= \dfrac{\mathrm{i}^l}{(2\pi)^{\frac{n}{2}}}\intL\half E_\alpha(-t^{\alpha}\lambda_{\xxi}^{\alpha})(\mathcal{F}f)_{lk} (\lambda_{\xxi})\lambda_{\xxi}^{\frac{n}{2}} J_{l+\frac{n-2}2}(\lambda_{\xxi}) {\rm d}\lambda_{\xxi},
		\end{equation}		
	where $J_{\nu}(\cdot)$ is the  Bessel function of the first kind of order $\nu$.
	\end{lem}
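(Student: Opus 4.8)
The plan is to substitute the spherical-harmonic expansion of $\mathcal{F}f$ into the integral representation \eqref{eq:solpdeWsf}, pass to spherical coordinates, and carry out the angular integration explicitly so that only a radial integral survives. Writing $\xxi=\lambda_{\xxi}\oomega_{\xxi}$ with $\oomega_{\xxi}\in\mathbb{S}^{n-1}$, so that ${\rm d}\xxi=\lambda_{\xxi}^{n-1}{\rm d}\lambda_{\xxi}{\rm d}\oomega_{\xxi}$, and inserting the double sum $\mathcal{F}f(\lambda_{\xxi}\oomega_{\xxi})=\sum_{l,k}(\mathcal{F}f)_{lk}(\lambda_{\xxi})\mathbi{Y}_{lk}(\oomega_{\xxi})$, equation \eqref{eq:solpdeWsf} becomes
\[
\mathcal{W}_{S}f(\ttheta,t;\alpha)=\frac{1}{(2\pi)^n}\intL\half E_{\alpha}(-t^{\alpha}\lambda_{\xxi}^{\alpha})\lambda_{\xxi}^{n-1}\sum_{l,k}(\mathcal{F}f)_{lk}(\lambda_{\xxi})\left(\intL_{\mathbb{S}^{n-1}}e^{\mathrm{i}\lambda_{\xxi}\oomega_{\xxi}\cdot\ttheta}\mathbi{Y}_{lk}(\oomega_{\xxi}){\rm d}\oomega_{\xxi}\right){\rm d}\lambda_{\xxi}.
\]
Because $f\in C^\infty(\RR^n)$ has compact support, $\mathcal{F}f$ is Schwartz and the spherical-harmonic series converges rapidly, so Fubini's theorem justifies interchanging the summation with the two integrations.

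The heart of the argument is the inner angular integral. By the Funk--Hecke theorem it is a scalar multiple of $\mathbi{Y}_{lk}(\ttheta)$, and evaluating the resulting Gegenbauer integral (equivalently, using the Bochner relation for the Fourier transform of $f_0(|\xx|)\mathbi{Y}_{lk}(\xx/|\xx|)$ together with the parity $\mathbi{Y}_{lk}(-\oomega)=(-1)^l\mathbi{Y}_{lk}(\oomega)$) yields the plane-wave expansion
\[
\intL_{\mathbb{S}^{n-1}}e^{\mathrm{i}\lambda_{\xxi}\oomega_{\xxi}\cdot\ttheta}\mathbi{Y}_{lk}(\oomega_{\xxi}){\rm d}\oomega_{\xxi}=\mathrm{i}^{l}(2\pi)^{\frac{n}{2}}\lambda_{\xxi}^{-\frac{n-2}{2}}J_{l+\frac{n-2}{2}}(\lambda_{\xxi})\mathbi{Y}_{lk}(\ttheta).
\]
Substituting this identity back, the Jacobian factor $\lambda_{\xxi}^{n-1}$ combines with $\lambda_{\xxi}^{-(n-2)/2}$ to give $\lambda_{\xxi}^{n/2}$, the constant $(2\pi)^{-n}$ combines with $(2\pi)^{n/2}$ to give $(2\pi)^{-n/2}$, and all the $\ttheta$-dependence is carried by $\mathbi{Y}_{lk}(\ttheta)$.

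Finally I would compare the resulting expansion of $\mathcal{W}_{S}f(\ttheta,t;\alpha)$ in the basis $\{\mathbi{Y}_{lk}\}$ with \eqref{eq:sheofWsf}; by orthonormality of the spherical harmonics the two sets of coefficients must coincide term by term, which is precisely \eqref{eq:Wsfcoeff}. The step I expect to be the main obstacle is the angular-integral identity: one must correctly invoke Funk--Hecke together with the Gegenbauer integral $\intL_{-1}^{1}e^{\mathrm{i}\lambda t}C_l^{\nu}(t)(1-t^2)^{\nu-1/2}{\rm d}t$ with $\nu=\tfrac{n-2}{2}$, and track the normalization of $\mathbi{Y}_{lk}$ so that the constants collapse exactly to $\mathrm{i}^{l}/(2\pi)^{n/2}$ and the Bessel order emerges as $l+\tfrac{n-2}{2}$; everything else is routine bookkeeping of powers of $\lambda_{\xxi}$ and of the constants.
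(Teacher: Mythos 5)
Your proposal is correct and follows essentially the same route as the paper: pass to polar coordinates $\xxi=\lambda_{\xxi}\oomega_{\xxi}$, insert the spherical-harmonic expansion of $\mathcal{F}f$, evaluate the angular integral by the Funk--Hecke theorem (the paper cites exactly the identity you state, with the same constant $(2\pi)^{n/2}\mathrm{i}^{l}\lambda_{\xxi}^{(2-n)/2}J_{l+\frac{n-2}{2}}(\lambda_{\xxi})\mathbi{Y}_{lk}(\ttheta)$), and identify coefficients against \eqref{eq:sheofWsf} by orthonormality. Your added remarks on Fubini and on the Gegenbauer integral behind Funk--Hecke are sound supplementary detail that the paper leaves implicit.
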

	\begin{proof}
	Changing the variables $\xxi\rightarrow\lambda_{\xxi}\oomega_{\xxi}$ in \eqref{eq:solpdeWsf}, we  write the measurement data as 
		\begin{equation*}\label{eq:newWsf}
		\begin{array}{ll}
		\mathcal{W}_{S}f(\ttheta,t;\alpha)
		%&=\dfrac1{(2\pi)^n}\displaystyle\intL_{\RR^n} E_\alpha(-t^\alpha |\xxi|^{\alpha} )e^{\mathrm{i}\xxi\cdot \ttheta}\mathcal F f(\xxi){\rm d}\xxi \\ \\
		%&=\dfrac1{(2\pi)^n}\displaystyle\intL_{\RR^n} E_\alpha(-t^\alpha |\lambda\oomega|^{\alpha} )e^{i\lambda\oomega\cdot \ttheta}\mathcal F{f}(\lambda\oomega){\rm d}\lambda\oomega\\ \\
		&=\dfrac1{(2\pi)^n}\displaystyle\intL_{\mathbb{S}^{n-1}}\displaystyle\intL\half E_\alpha(- t^\alpha \lambda_{\xxi}^{\alpha})e^{\mathrm{i}\lambda_{\xxi}\oomega_{\xxi}\cdot \ttheta}\mathcal F f(\lambda_{\xxi}\oomega_{\xxi})\lambda_{\xxi}^{n-1}{\rm d}\lambda_{\xxi}{\rm d}S(\oomega_{\xxi})\\ 
		%&=\dfrac1{(2\pi)^n}\displaystyle\intL_{\mathbb{S}^{n-1}}\displaystyle\intL\half E_\alpha(- t^\alpha \lambda^{\alpha})e^{\mathrm{i}\lambda\oomega\cdot \ttheta}\left[\displaystyle\sum^\infty_{l=0}\displaystyle\sum^\mathbi{N(n,l)}_{k=0}(\mathcal{F}f)_{l k}(\lambda)\mathbi{Y}_{l k}(\oomega)\right]\lambda^{n-1}{\rm d}\lambda{\rm dS}(\oomega)\\ \\
		&=\dfrac1{(2\pi)^n}\displaystyle\sum^\infty_{l=0}\displaystyle\sum^\mathbi{N(n,l)}_{k=0}\displaystyle\intL_{\mathbb{S}^{n-1}}\displaystyle\intL\half E_\alpha(- t^\alpha \lambda_{\xxi}^{\alpha})e^{\mathrm{i}\lambda_{\xxi}\oomega_{\xxi}\cdot \ttheta}(\mathcal{F}f)_{l k}(\lambda_{\xxi})\mathbi{Y}_{l k}(\oomega_{\xxi})\lambda_{\xxi}^{n-1}{\rm d}\lambda_{\xxi}{\rm d}S(\oomega_{\xxi}) \\
		&=\dfrac{\mathrm{i}^l}{(2\pi)^{\frac{n}{2}}}\displaystyle\sum^\infty_{l=0}\displaystyle\sum^\mathbi{N(n,l)}_{k=0}\intL\half E_\alpha(-t^\alpha\lambda_{\xxi}^{\alpha} )(\mathcal F f)_{l k}(\lambda_{\xxi})\lambda_{\xxi}^{\frac{n}2}J_{l+\frac{n-2}2}(\lambda_{\xxi}){\rm d}\lambda_{\xxi}\mathbi{Y}_{l k}(\ttheta),
		\end{array}
		\end{equation*}
	where %in the third line, we used the spherical harmonics expansion of the Fourier transform \eqref{eq:sheoffourier}, and 
	in the last line, we used the  Funk-Hecke theorem 
	\cite[(3.19) in chapter 7.]{natterer86}: 
	    \begin{equation}\label{eq:funkhecke}
	    \displaystyle\int_{\mathbb{S}^{n-1}}e^{\mathrm{i}\lambda_{\xxi}\oomega_{\xxi}\cdot \ttheta}\mathbi{Y}_{lk}(\oomega_{\xxi}){\rm d}S(\oomega_{\xxi}) =(2\pi)^{\frac n2}\mathrm{i}^l \lambda_{\xxi}^\frac{2-n}{2} J_{l+\frac{n-2}{2}}(\lambda_{\xxi})\mathbi{Y}_{l k}(\ttheta).
        \end{equation}
        Comparing \eqref{eq:sheofWsf} completes our proof.
%	Comparing the two equations \eqref{eq:sheofWsf} and \eqref{eq:newWsf}, we have
%	    \begin{equation*}
%		(\mathcal W_{S}f)_{lk}(t;\alpha) =\dfrac{\mathrm{i}^l}{(2\pi)^{\frac{n}{2}}}\displaystyle\intL\half E_\alpha(-t^\alpha\lambda_{\xxi}^{\alpha} )(\mathcal{F} f)_{lk}(\lambda_{\xxi})\lambda_{\xxi}^\frac{n}2 J_{l+\frac{n-2}2}(\lambda_{\xxi}){\rm d}\lambda_{\xxi}. 
%	    \end{equation*}
	\end{proof}
    
	\begin{thm}\label{thm:mainsphere}
	For $f\in C^\infty(\RR^n)$ with compact support, we have  
		\begin{equation}\label{eq:FofMellin}
		\mathcal M(F_{lk})(s)=2^{\frac{n}{2}}\pi^{\frac{n-2}{2}}\alpha \mathrm{i}^{-l}\Gamma(1-s)\sin\left(\frac {\pi s}\alpha\right) \mathcal M \left[(\mathcal W_{S}f)_{lk}(\cdot;\alpha)\right](s),\quad 0<\mathrm{Re}(s)<\alpha,
		\end{equation}
	where
		\begin{equation*}\label{eq:Flk}
		F_{lk}(\rho)=(\mathcal{F}f)_{l k}(\rho^{-1}) J_{l+\frac{n-2}2}(\rho^{-1})\rho^{-\frac{n+2}{2}}.
		\end{equation*}	
	\end{thm}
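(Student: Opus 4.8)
The plan is to recast the integral representation \eqref{eq:Wsfcoeff} as a Mellin convolution and then apply the convolution property $\mathcal M(f\times g)(s)=\mathcal Mf(s)\,\mathcal Mg(s)$. The structural fact that makes this possible is that the Mittag-Leffler factor depends on $t$ and $\lambda_{\xxi}$ only through their product, since $E_\alpha(-t^\alpha\lambda_{\xxi}^\alpha)=E_\alpha(-(t\lambda_{\xxi})^\alpha)$. Writing $\Phi(u):=E_\alpha(-u^\alpha)$, the right-hand side of \eqref{eq:Wsfcoeff} becomes $\tfrac{\mathrm{i}^l}{(2\pi)^{n/2}}\intL\half\Phi(t\lambda_{\xxi})(\mathcal Ff)_{lk}(\lambda_{\xxi})\lambda_{\xxi}^{n/2}J_{l+\frac{n-2}2}(\lambda_{\xxi})\,{\rm d}\lambda_{\xxi}$.

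First I would apply the change of variable $\lambda_{\xxi}\mapsto\tau^{-1}$. This turns the product $t\lambda_{\xxi}$ into the quotient $t/\tau$ demanded by \eqref{eq:Mellinconvol}, while the Jacobian of the substitution combines with the algebraic weight $\lambda_{\xxi}^{n/2}$ so that, after factoring out the Mellin measure ${\rm d}\tau/\tau$, the remaining weight is precisely $F_{lk}(\tau)=(\mathcal Ff)_{lk}(\tau^{-1})J_{l+\frac{n-2}2}(\tau^{-1})\tau^{-(n+2)/2}$. The outcome is the clean identity $(\mathcal W_Sf)_{lk}(t;\alpha)=\tfrac{\mathrm{i}^l}{(2\pi)^{n/2}}(F_{lk}\times\Phi)(t)$. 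Taking the Mellin transform and using the convolution property then gives $\mathcal M[(\mathcal W_Sf)_{lk}(\cdot;\alpha)](s)=\tfrac{\mathrm{i}^l}{(2\pi)^{n/2}}\mathcal M(F_{lk})(s)\,\mathcal M(\Phi)(s)$, so the theorem reduces to evaluating $\mathcal M(\Phi)$ and solving for $\mathcal M(F_{lk})$.

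The one genuine computation left is the Mellin transform of the Mittag-Leffler function. I would substitute $x=u^\alpha$ to write $\mathcal M(\Phi)(s)=\intL\half E_\alpha(-u^\alpha)u^{s-1}{\rm d}u=\tfrac1\alpha\intL\half E_\alpha(-x)x^{s/\alpha-1}{\rm d}x$, and then invoke the classical value $\intL\half E_\alpha(-x)x^{\sigma-1}{\rm d}x=\Gamma(\sigma)\Gamma(1-\sigma)/\Gamma(1-\alpha\sigma)$ with $\sigma=s/\alpha$, valid for $0<\mathrm{Re}(\sigma)<1$, i.e.\ $0<\mathrm{Re}(s)<\alpha$. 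Since $\Gamma(1-\alpha\sigma)=\Gamma(1-s)$ and the reflection formula gives $\Gamma(\sigma)\Gamma(1-\sigma)=\pi/\sin(\pi\sigma)$, this yields $\mathcal M(\Phi)(s)=\pi/[\alpha\sin(\pi s/\alpha)\Gamma(1-s)]$. Dividing through and collecting the numerical constant $(2\pi)^{n/2}/\pi=2^{n/2}\pi^{(n-2)/2}$, together with $\mathrm i^{-l}=1/\mathrm i^{l}$, reproduces \eqref{eq:FofMellin} exactly, and the admissible strip $0<\mathrm{Re}(s)<\alpha$ is inherited directly from the convergence of this transform.

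I expect the main difficulty to be analytic bookkeeping rather than algebra: one must justify the convolution property, which needs absolute convergence of both Mellin integrals on a common vertical strip and a Fubini argument. The rapid decay of $(\mathcal Ff)_{lk}$ (a radial coefficient of the Schwartz function $\mathcal Ff$) together with $J_{l+\frac{n-2}2}(\lambda)=O(\lambda^{-1/2})$ as $\lambda\to\infty$ makes $F_{lk}(\tau)$ decay rapidly as $\tau\to0$, while $J_{l+\frac{n-2}2}(\tau^{-1})=O(\tau^{-(l+(n-2)/2)})$ as $\tau\to\infty$ forces $F_{lk}(\tau)=O(\tau^{-(l+n)})$ there; combined with $\Phi(0)=1$ and $E_\alpha(-u^\alpha)\sim u^{-\alpha}/\Gamma(1-\alpha)$ as $u\to\infty$, the two convergence strips overlap precisely on $0<\mathrm{Re}(s)<\alpha$ (valid for $n\ge2$), which is the range asserted. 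Verifying these estimates and the resulting overlap is the only place where care is genuinely required.
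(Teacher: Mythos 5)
Your proposal is correct and follows essentially the same route as the paper's proof: the reciprocal change of variables $\lambda_{\xxi}\mapsto\tau^{-1}$ to expose the Mellin convolution $F_{lk}\times E$ with $E(\rho;\alpha)=E_\alpha(-\rho^\alpha)$, the convolution property of the Mellin transform, the Mellin transform of the Mittag-Leffler function simplified by Euler's reflection formula, and the same endpoint estimates ($F_{lk}(\rho)=O(\rho^\infty)$ as $\rho\to0^+$, $O(\rho^{-l-n})$ as $\rho\to\infty$) giving the strip $0<\mathrm{Re}(s)<\alpha$. The only cosmetic difference is that you derive $\mathcal M(E)(s;\alpha)$ by the substitution $x=u^\alpha$ from the classical transform of $E_\alpha(-x)$, whereas the paper cites this formula directly.
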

	\begin{proof}
	%To easily apply the Mellin transform, we change the variable, $\lambda \rightarrow 1/\overline{\lambda},$ on the right-side of (11). We have 
    %We change the right-side of \eqref{eq:Wsfcoeff} to easily apply the Mellin transform. 
    By changing the variables $\lambda_{\xxi} \rightarrow \tilde{\lambda_{\xxi}}^{-1},$ \eqref{eq:Wsfcoeff} can be represented as 
        \begin{equation}\label{eq:Wsf FxE}
        \begin{array}{ll}
        (\mathcal W_Sf)_{lk}(t;\alpha)
        &=\dfrac{\mathrm{i}^l}{(2\pi)^{\frac{n}{2}}}\displaystyle\intL\half E_\alpha(-t^\alpha\lambda_{\xxi}^{\alpha} )(\mathcal{F} f)_{lk}(\lambda_{\xxi})J_{l+\frac{n-2}2}(\lambda_{\xxi})\lambda_{\xxi}^\frac{n}2 {\rm d}\lambda_{\xxi} \\
        &=\dfrac{\mathrm{i}^l}{(2\pi)^{\frac{n}{2}}}\displaystyle\intL\half E_\alpha\left(-t^{\alpha}\tilde{\lambda_{\xxi}}^{-\alpha}\right) \left(\mathcal{F} f\right)_{lk} \left(\tilde{\lambda_{\xxi}}^{-1}\right) J_{l+\frac{n-2}2} \left(\tilde{\lambda_{\xxi}}^{-1}\right) \tilde{\lambda_{\xxi}}^{-\frac{n+4}2} {\rm d}\tilde{\lambda_{\xxi}} \\
        &=\dfrac{\mathrm{i}^l}{(2\pi)^{\frac{n}{2}}}F_{l k}\times E(t;\alpha),
        \end{array}
        \end{equation}
    where %in the third line, we used the Mellin convolution (9) with respect to $t,$ $F_{lk}$ defined in \eqref{eq:Flk} and 
    \begin{equation}\label{eq:E}
    E(\rho;\alpha)=E_\alpha(-\rho^{\alpha}).
    \end{equation}
    %$E(\rho;\alpha)=E_\alpha(-\rho^{\alpha}).$
       %\begin{equation}\label{eq:abbreviations}
       %F_{lk}(\rho)=(\mathcal{F}f)_{l k}(1/\rho)(1/\rho)^{\frac {n+2}{2}}J_{l+\frac{n-2}2}(1/\rho) \quad \mbox{and} \quad E(\rho;\alpha)=E_\alpha(-\rho^{\alpha}).
       %\end{equation}
    To check that the Mellin transform of $(\mathcal{W}_{S}f)_{lk}(\cdot;\alpha)$ in \eqref{eq:Wsf FxE} is well-defined, it suffices to check that the Mellin transforms of $F_{lk}$ and $E$ are well-defined, respectively.
    Let us consider the Mellin transform of $F_{lk}$:  $\mathcal M \left(F_{lk}\right)(s) =\intL\half(\mathcal{F}f)_{lk}(\rho^{-1})J_{l+\frac{n-2}2}(\rho^{-1})\rho^{s-\frac{n+4}{2}}{\rm d}\rho$. 
Notice that
        \begin{equation*}
        F_{lk}(\rho)=O(\rho^{\infty}) \ \  \mbox{as} \ \  \rho \rightarrow 0^{+} \quad \mbox{and} \quad F_{lk}(\rho)=O(\rho^{-l-n}) \ \  \mbox{as} \ \ \rho \rightarrow \infty,
        \end{equation*}
since $J_{\nu}(\tilde{\rho})=O(\tilde{\rho}^{\nu})$ as $\tilde{\rho} \to 0^{+}$ \cite{luchko13}. Therefore $\mathcal M \left(F_{lk}\right)(s)$ is well-defined for $\mathrm{Re}(s)<l+n-\epsilon$.
    Next, we consider the Mellin transform of $E.$ Taking the Mellin transform of $E,$ we obtain the following formula (see \cite[Lemma 9.1]{haubold11})
%$$\mathcal M_\rho(E)(s)=\intL\half E(\rho;\alpha)\rho^{s-1}{\rm d}\rho.$$
        \begin{equation}\label{eq:EofMellin}
        \mathcal{M}(E)(s;\alpha)
        =\intL\half E(\rho;\alpha)\rho^{s-1}{\rm d}\rho
        =\frac{\Gamma\left(\frac{s}{\alpha}\right)\Gamma\left(1-\frac{s}{\alpha}\right)}{\alpha\Gamma(1-s)}=\dfrac{\pi}{\alpha\Gamma(1-s)\sin(\frac{\pi s}{\alpha})}, \qquad 0<\mathrm{Re}(s)<\alpha
        \end{equation} 
    where in the third equality, we applied the Euler's reflection formula $\Gamma(p)\Gamma(1-p)=\dfrac{\pi}{\sin(\pi p)}.$
    Thus the Mellin transform of \eqref{eq:Wsf FxE} is well-defined for $0<\mathrm{Re}(s)<\alpha$. 
    Taking the Mellin transforms on both sides of \eqref{eq:Wsf FxE}, we have
        \begin{equation*}
        \begin{array}{ll}
        \mathcal{M} \left[(\mathcal{W}_{S}f)_{lk}(\cdot;\alpha)\right](s)
        %&=\dfrac{\mathrm{i}^l}{(2\pi)^{\frac{n}{2}}} \mathcal M(F_{lk}\times E)(s;\alpha)\\ \\
        =\dfrac{\mathrm{i}^l}{(2\pi)^{\frac{n}{2}}}\mathcal{M}(F_{lk})(s) \mathcal{M}(E)(s;\alpha)
        %&=\dfrac{\mathrm{i}^l}{(2\pi)^{\frac{n}{2}}} \mathcal M(F_{lk})(s) \dfrac{\Gamma\left(\frac s \alpha\right)\Gamma\left(1-\frac s\alpha\right)}{\alpha \Gamma(1-s)}\\ \\
        %&=\dfrac{\mathrm{i}^l}{(2\pi)^{\frac{n}{2}}} \mathcal M(F_{lk})(s) \dfrac{\pi}{\alpha \Gamma(1-s)\sin(\frac {\pi s}\alpha)}\\ \\
        =\dfrac{\pi\mathrm{i}^l}{(2\pi)^{\frac{n}{2}}\alpha}\dfrac{\mathcal M(F_{lk})(s)}{\Gamma(1-s)\sin\left(\frac{\pi s}{\alpha}\right)},
        \end{array}
        \end{equation*} 
    where in the second equality, we used \eqref{eq:EofMellin}.
    \end{proof}
    Now taking the inverse Mellin transform of $\mathcal{M} \left[(\mathcal{W}_{S}f)_{lk}(\cdot;\alpha)\right](s)$, we can reconstruct $F_{lk}$ and $(\mathcal Ff)_{lk}$.
	\begin{cor}\label{cor:Flk}
		For $f\in C^\infty(\RR^n)$ with compact support, we reconstruct $f_{lk}$ from $(\mathcal{W}_{S}f)_{lk}$ by the recovery of $F_{lk}:$	
	    \begin{equation*}
	    (\mathcal Ff)_{lk}(\rho)=2^{\frac{n}{2}}\pi^{\frac{n-2}{2}}\alpha \mathrm{i}^{-l}\mathcal{M}^{-1}\left[\Gamma(1-\cdot)\sin\left(\dfrac{\pi \cdot}{\alpha}\right)\mathcal{M} \left[(\mathcal{W}_{S}f)_{lk}\right](\cdot)\right](\rho^{-1})J_{l+\frac{n-2}2}(\rho^{})^{-1}\rho^{-\frac{n+2}{2}}.
	    \end{equation*}		
	\end{cor}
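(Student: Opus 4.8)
The plan is to invert the identity of Theorem \ref{thm:mainsphere} in two elementary stages: first recover the auxiliary radial function $F_{lk}$ by an inverse Mellin transform, and then unwind the definition of $F_{lk}$ to isolate $(\mathcal{F}f)_{lk}$. First I would invoke the Mellin inversion formula on the strip $0<\mathrm{Re}(s)<\alpha$, which is exactly the strip on which both $\mathcal{M}(F_{lk})$ and $\mathcal{M}(E)$ were shown to be well-defined in the proof of the theorem, so that the product formula and its inverse are legitimate there. Applying $\mathcal{M}^{-1}$ to both sides of \eqref{eq:FofMellin} and using that the constant $2^{\frac{n}{2}}\pi^{\frac{n-2}{2}}\alpha\,\mathrm{i}^{-l}$ passes through the (linear) inverse transform, I obtain
\begin{equation*}
F_{lk}(\rho)=2^{\frac{n}{2}}\pi^{\frac{n-2}{2}}\alpha\,\mathrm{i}^{-l}\,\mathcal{M}^{-1}\!\left[\Gamma(1-\cdot)\sin\!\left(\tfrac{\pi\cdot}{\alpha}\right)\mathcal{M}\!\left[(\mathcal{W}_{S}f)_{lk}\right](\cdot)\right](\rho).
\end{equation*}

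Next I would recover $(\mathcal{F}f)_{lk}$ from $F_{lk}$ purely algebraically, using the defining relation $F_{lk}(\rho)=(\mathcal{F}f)_{lk}(\rho^{-1})J_{l+\frac{n-2}{2}}(\rho^{-1})\rho^{-\frac{n+2}{2}}$ stated in the theorem. Substituting $\rho\mapsto\rho^{-1}$ turns this into $F_{lk}(\rho^{-1})=(\mathcal{F}f)_{lk}(\rho)\,J_{l+\frac{n-2}{2}}(\rho)\,\rho^{\frac{n+2}{2}}$, and solving for the target coefficient gives
\begin{equation*}
(\mathcal{F}f)_{lk}(\rho)=F_{lk}(\rho^{-1})\,J_{l+\frac{n-2}{2}}(\rho)^{-1}\rho^{-\frac{n+2}{2}}.
\end{equation*}
Inserting the inverse-Mellin expression for $F_{lk}$ evaluated at $\rho^{-1}$ then yields precisely the displayed formula of the corollary. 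Finally, once $(\mathcal{F}f)_{lk}$ is known for every $l,k$, the radial profiles $f_{lk}$ are recovered by the inverse Fourier transform, and reassembling the spherical-harmonic series reconstructs $f$.

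The step I expect to be the genuine obstacle is the division by $J_{l+\frac{n-2}{2}}(\rho)$, since this Bessel function has infinitely many positive zeros, at which the reconstruction formula is formally singular. I would address this by exploiting regularity: because $f$ is smooth with compact support, each $(\mathcal{F}f)_{lk}$ extends to an entire function of $\rho$, so its values on the discrete zero set of $J_{l+\frac{n-2}{2}}$ are determined by its values on the complement through continuity (indeed analyticity). Thus the formula recovers $(\mathcal{F}f)_{lk}(\rho)$ unambiguously away from the Bessel zeros, and the remaining values are filled in by continuous extension; the zero set, being discrete, carries no measure and does not obstruct the subsequent Fourier inversion. A secondary technical point worth checking is the validity of the Mellin product rule \eqref{eq:MfMg} for the convolution $F_{lk}\times E$ appearing in \eqref{eq:Wsf FxE}, but this is already underwritten by the strip estimates established in the proof of Theorem \ref{thm:mainsphere}, so no new analysis is needed here.
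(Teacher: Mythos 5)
Your proposal is correct and takes essentially the same route the paper intends: apply the inverse Mellin transform to the identity \eqref{eq:FofMellin} of Theorem \ref{thm:mainsphere} on the strip $0<\mathrm{Re}(s)<\alpha$ to recover $F_{lk}$, then algebraically unwind the definition $F_{lk}(\rho)=(\mathcal{F}f)_{lk}(\rho^{-1})J_{l+\frac{n-2}{2}}(\rho^{-1})\rho^{-\frac{n+2}{2}}$ via the substitution $\rho\mapsto\rho^{-1}$ to isolate $(\mathcal{F}f)_{lk}$. Your added discussion of the zeros of $J_{l+\frac{n-2}{2}}$, handled by the analyticity of $(\mathcal{F}f)_{lk}$ for compactly supported smooth $f$, addresses a genuine point that the paper passes over in silence and is a worthwhile refinement rather than a deviation.
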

%	From $(\mathcal Ff)_{lk}$, we can recover $f$ through $\mathcal Ff$.
	
	So far, we have considered the measurement data $\mathcal{W}_{S}f$. 
	Note that our approach can be applied to the direction dependent measurement data $g$ (see \cite{zangerl19}.)
		\begin{rmk}
		For $f\in C^\infty(\RR^n)$ with compact support, let 
%		let $g$ . Then, we have for $(\ttheta,t)\in\mathbb{S}^{n-1}\times[0,\infty)$
		%we can write the measurement data $g$ as
		\begin{equation*}\label{eq:g}
		g(\ttheta,t;\alpha)=c_1\mathcal{W}_{S}f(\ttheta,t;\alpha)+c_2\left[\ttheta\cdot\nabla_\xx\mathcal{W}_{S}f(\xx,t;\alpha)\right]_{\xx=\ttheta},\quad (\ttheta,t)\in\mathbb{S}^{n-1}\times[0,\infty)
		\end{equation*}
		be the direction dependent measurement data modeled as in \cite[see (1.2)]{zangerl19},
		%\quad\mbox{for}\quad(\ttheta,t)\in\mathbb{S}^{n-1}\times[0,\infty)
		where $\ttheta\cdot\nabla_\xx\mathcal{W}_{S}f$ is the normal derivative of  $\mathcal{W}_{S}f$ and $c_{1}$, $c_{2}\in\RR$ are constants. Using \eqref{eq:sheofWsf}, \eqref{eq:funkhecke}, and the Bessel function identity $\frac{{\rm d}}{{\rm d}\lambda}\left[ \lambda^{-\nu}J_{\nu}(\lambda)\right]=-\lambda^{-\nu}J_{\nu+1}(\lambda)$ (see, \cite[(5.13) on p.133]{folland09}), we have $g_{lk}$:
		\begin{equation}\label{eq:glk}
		\begin{array}{ll}
		g_{lk}(t;\alpha)
		&=\displaystyle\dfrac{\mathrm{i}^l}{(2\pi)^{\frac{n}{2}}} \displaystyle\intL\half E_\alpha(-t^\alpha\lambda^{\alpha})(\mathcal F f)_{l k}(\lambda)\lambda^\frac{n}2 \left[(c_1+c_2l)J_{l+\frac{n-2}2}(\lambda)-c_2\lambda J_{l+\frac n2}(\lambda)\right]{\rm d}\lambda \\
		%&=\displaystyle\dfrac{\mathrm{i}^l}{(2\pi)^{\frac{n}{2}}}\displaystyle\intL\half E_\alpha\left(-t^{\alpha}/\overline{\lambda}^{\alpha}\right)\left(\mathcal{F} f\right)_{lk}(1/\overline{\lambda})(1/\overline{\lambda})^{\frac{n}2 +2}\left[\left(c_1+c_2l\right)J_{l+\frac{n-2}2}(1/\overline{\lambda})-c_2(1/\overline{\lambda}) J_{l+\frac n2}(1/\overline{\lambda})\right]{\rm d}\overline{\lambda} \\
		&=\dfrac{\mathrm{i}^l}{(2\pi)^{\frac{n}{2}}}F_{l k}\times E(t;\alpha),
		\end{array} 
		\end{equation}
		where %in the second line, we changed the variable $\lambda \rightarrow 1/\overline{\lambda}$ and in the third line, we used the Mellin convolution (9) with respect to $t$, 
	    %$E(\rho;\alpha)=E_\alpha(-\rho^{\alpha})$ 
		we used \eqref{eq:E} and 
		\begin{equation*}
		F_{lk}(\rho)=(\mathcal{F}f)_{l k}(\rho^{-1})\rho^{-\frac{n+2}{2}} \left[(c_1+c_2l)J_{l+\frac{n-2}{2}}(\rho^{-1})-c_2(\rho^{-1}) J_{l+\frac{n}{2}}(\rho^{-1})\right].
		\end{equation*}
		By Taking the Mellin transform on both sides of \eqref{eq:glk}, and since $\mathcal M (g_{lk})$ is well-defined for $0<\mathrm{Re}(s)<\alpha$, we have $\mathcal M(F_{lk}):$	
		\begin{equation*}
		\mathcal M(F_{lk})(s)=2^{\frac{n}{2}}\pi^{\frac{n}{2}-1} \alpha \mathrm{i}^{-l}\Gamma(1-s)\sin\left(\frac {\pi s}\alpha\right) \mathcal M(g_{lk})(s).
		\end{equation*}
		Also, using the inverse Mellin transform of $\mathcal M(F_{lk})$, we recover $F_{lk}$, $\mathcal Ff_{lk}$, and $f$ from the Mellin transform $\mathcal M(F_{lk})$.
		%	We find $(\mathcal{F}f)_{l k}$ from $F_{lk}$ and finally we find $f.$
	\end{rmk}
	
	%%%%%%%%%%%%%%%%%%%%%%%%%%%%%%%%%%%%%%%%%
	\subsection{Hyperplanar geometry}
	%%%%%%%%%%%%%%%%%%%%%%%%%%%%%%%%%%%%%%%%%
	
	Similar to the previous subsection \ref{subsec:sphere}, we show how to recover $f$ from $\mathcal W_H f.$  
	The measurement data $\mathcal W_H f$ is given as 
	\begin{equation}\label{eq:defwhf}
	\mathcal W_Hf(\boldsymbol{u},t;\alpha) = \dfrac1{(2\pi)^n} \displaystyle\intL_{\mathbb{R}^{n-1}} \displaystyle\intL_{\mathbb{R}} E_\alpha(-t^{\alpha}\left\vert (\xxi_{*},\xi_{n}) \right\vert^{\alpha}) e^{\mathrm{i}\boldsymbol{u} \cdot \xxi_{*}} \mathcal Ff(\xxi_{*},\xi_{n}){\rm d}\xi_{n}{\rm d}\xxi_{*}, \quad (\boldsymbol{u},t) \in \RR^{n-1} \times [0,\infty).
	\end{equation}
	
%	Also, it is not easy to obtain $f$ directly from $\mathcal W_{S}f$. Therefore, we recover $f$ through the process below. 
%Again, like previous subsection,	The following lemma \ref{lem:Wsfcoeff} gives a relation between
First we see the analog of the Fourier slice theorem:	
	\begin{lem}\label{lem:Fu(WHf)}
		For $f\in C^\infty(\RR^n)$ with compact support, we have
		\begin{equation}\label{eq:fuwhf}
		\mathcal F_{\boldsymbol{u}} (\mathcal W_Hf)(\boldsymbol{\eta_{*}},t;\alpha)=\dfrac1{\pi}\displaystyle\intL_{0}^{\infty} E_\alpha(-t^{\alpha}\lambda^{\alpha}) \mathcal Ff(\boldsymbol{\eta_{*}}, \sqrt{\lambda^{2} - \left\vert \boldsymbol{\eta_{*}} \right\vert^{2}}) \dfrac{\lambda \chi_{\left\vert \boldsymbol{\eta_{*}} \right\vert \le  \lambda }(\lambda)}{\sqrt{\lambda^{2}-\left\vert \boldsymbol{\eta_{*}} \right\vert^{2}}}{\rm d}\lambda.
		\end{equation}		
	\end{lem}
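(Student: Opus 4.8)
The plan is to compute the partial Fourier transform $\mathcal F_{\boldsymbol u}$ directly from the integral representation \eqref{eq:defwhf} and then pass to the radial variable $\lambda=|(\boldsymbol{\eta_{*}},\xi_n)|$. First I would apply
$\mathcal F_{\boldsymbol u}(\cdot)(\boldsymbol{\eta_{*}})=\intL_{\RR^{n-1}}(\cdot)\,e^{-\mathrm{i}\boldsymbol u\cdot\boldsymbol{\eta_{*}}}{\rm d}\boldsymbol u$
to both sides of \eqref{eq:defwhf}. Because $f\in C^\infty(\RR^n)$ has compact support, $\mathcal Ff$ is Schwartz, so Fubini's theorem permits interchanging the $\boldsymbol u$–integration with the $(\xxi_{*},\xi_n)$–integration. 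Performing the $\boldsymbol u$–integral yields $\intL_{\RR^{n-1}}e^{\mathrm{i}\boldsymbol u\cdot(\xxi_{*}-\boldsymbol{\eta_{*}})}{\rm d}\boldsymbol u=(2\pi)^{n-1}\delta(\xxi_{*}-\boldsymbol{\eta_{*}})$, and this delta collapses the $\xxi_{*}$–integral. Using $|(\boldsymbol{\eta_{*}},\xi_n)|=\sqrt{|\boldsymbol{\eta_{*}}|^{2}+\xi_n^{2}}$, and the cancellation $(2\pi)^{n-1}/(2\pi)^{n}=1/(2\pi)$, this leaves the one-dimensional expression
\[
\mathcal F_{\boldsymbol u}(\mathcal W_Hf)(\boldsymbol{\eta_{*}},t;\alpha)=\frac{1}{2\pi}\intL_{\RR}E_\alpha\!\left(-t^{\alpha}\left(|\boldsymbol{\eta_{*}}|^{2}+\xi_n^{2}\right)^{\alpha/2}\right)\mathcal Ff(\boldsymbol{\eta_{*}},\xi_n)\,{\rm d}\xi_n .
\]

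Next I would fold the $\xi_n$–integral onto the half-line and change variables. Writing $\intL_{\RR}=\intL_{0}^{\infty}+\intL_{-\infty}^{0}$ and noting that the Mittag-Leffler factor depends on $\xi_n$ only through $\xi_n^{2}$, the two half-lines contribute $\mathcal Ff(\boldsymbol{\eta_{*}},+\sqrt{\lambda^{2}-|\boldsymbol{\eta_{*}}|^{2}})$ and $\mathcal Ff(\boldsymbol{\eta_{*}},-\sqrt{\lambda^{2}-|\boldsymbol{\eta_{*}}|^{2}})$, respectively. Since in the planar geometry $f$ is taken even in $x_n$ (the standard even-reflection convention across the detection hyperplane), $\mathcal Ff(\boldsymbol{\eta_{*}},\cdot)$ is even, so these two terms coincide and add to a factor $2$, converting the prefactor $1/(2\pi)$ into $1/\pi$. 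On $\xi_n\in(0,\infty)$ I would then substitute $\lambda=\sqrt{|\boldsymbol{\eta_{*}}|^{2}+\xi_n^{2}}$, so that $\lambda$ ranges over $(|\boldsymbol{\eta_{*}}|,\infty)$, with $\xi_n=\sqrt{\lambda^{2}-|\boldsymbol{\eta_{*}}|^{2}}$ and ${\rm d}\xi_n=\lambda(\lambda^{2}-|\boldsymbol{\eta_{*}}|^{2})^{-1/2}{\rm d}\lambda$; the lower limit $\lambda\ge|\boldsymbol{\eta_{*}}|$ is exactly recorded by the indicator $\chi_{|\boldsymbol{\eta_{*}}|\le\lambda}(\lambda)$, giving precisely \eqref{eq:fuwhf}.

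The hard part will not be any single calculation but rather the two bookkeeping points. First, the delta-function manipulation must be justified honestly, either by interpreting it in the tempered-distribution sense or, more cleanly, by recognizing that integrating out $\boldsymbol u$ is exactly Fourier inversion in the $(n-1)$ transverse variables; the Schwartz decay of $\mathcal Ff$ makes every interchange legitimate. Second, and more essential, one must observe that the precise form of \eqref{eq:fuwhf}—the constant $1/\pi$ together with the single positive square root—hinges on the evenness of $\mathcal Ff(\boldsymbol{\eta_{*}},\cdot)$; without it one only obtains $1/(2\pi)$ times the sum of the two branches. Finally I would check that the substitution remains valid even though the Jacobian $\lambda(\lambda^{2}-|\boldsymbol{\eta_{*}}|^{2})^{-1/2}$ blows up as $\lambda\to|\boldsymbol{\eta_{*}}|^{+}$: this is merely an integrable $(\lambda-|\boldsymbol{\eta_{*}}|)^{-1/2}$ singularity, so the resulting integral converges and the identity holds pointwise in $(\boldsymbol{\eta_{*}},t)$.
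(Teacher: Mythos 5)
Your proof is correct and follows essentially the same route as the paper's: Fourier inversion in the transverse variables to reduce to a one-dimensional $\xi_n$-integral with prefactor $1/(2\pi)$, folding onto the half-line via evenness of $\mathcal Ff(\boldsymbol{\eta_{*}},\cdot)$ to get $1/\pi$, and the substitution $\lambda=|(\boldsymbol{\eta_{*}},\xi_n)|$. The only difference is that you spell out the delta-function/Fubini bookkeeping and flag the evenness convention explicitly, which the paper compresses into the phrase ``we used the evenness of $f$ and $E_\alpha$ with respect to the last variable $\xi_n$.''
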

    This Lemma for $\alpha=2$ is already studied in \cite{anastasiozmr07,kostlifbw01,moon18}.
	\begin{proof}
		Taking the $n-1$-dimensional Fourier transform of $\mathcal W_Hf$ defined in \eqref{eq:defwhf} with respect to $\boldsymbol{u}$, we have
		\begin{equation*}
		\begin{array}{ll}
		\mathcal F_{\boldsymbol{u}} (\mathcal W_Hf)(\boldsymbol{\eta_{*}},t;\alpha)
		%&= \dfrac{1}{(2\pi)}\dfrac1{(2\pi)^{n-1}}  \displaystyle\intL_{\RR^{n-1}} \displaystyle\intL_{\RR^{n-1}} \left( \displaystyle\intL_{\RR} E_\alpha(-t^{\alpha}\left\vert (\xxi_{*},\xi_{n}) \right\vert^{\alpha}) \mathcal Ff(\xxi_{*},\xi_{n}){\rm d}\xi_{n} \right) e^{\mathrm{i}\boldsymbol{u} \cdot (\xxi_{*}-\boldsymbol{\eta_{*}})} {\rm d}\xxi_{*}{\rm d}\boldsymbol{u} \\
		&=\dfrac1{2\pi}\displaystyle\intL_{\RR}E_\alpha(-t^{\alpha}\left\vert (\boldsymbol{\eta_{*}},\xi_{n}) \right\vert^{\alpha})\mathcal Ff(\boldsymbol{\eta_{*}},\xi_{n}){\rm d}\xi_{n} \\
		&=\dfrac1{\pi}\displaystyle\intL_{0}^{\infty}E_\alpha(-t^{\alpha}\left\vert (\boldsymbol{\eta_{*}},\xi_{n}) \right\vert^{\alpha})\mathcal Ff(\boldsymbol{\eta_{*}},\xi_{n}){\rm d}\xi_{n} \\
		&=\dfrac1{\pi}\displaystyle\intL_{0}^{\infty} E_\alpha(-t^{\alpha}\lambda^{\alpha}) \mathcal Ff(\boldsymbol{\eta_{*}}, \sqrt{\lambda^{2} - \left\vert \boldsymbol{\eta_{*}} \right\vert^{2}}) \dfrac{\lambda \chi_{\left\vert \boldsymbol{\eta_{*}} \right\vert \le  \lambda }(\lambda)}{\sqrt{\lambda^{2}-\left\vert \boldsymbol{\eta_{*}} \right\vert^{2}}}{\rm d}\lambda
		\end{array}	
		\end{equation*}
	    where in the second line, we used the evenness of $f$ and $E_{\alpha}$ with respect to the last variable $\xi_{n}$, and in the last line, we changed the variables $\left\vert (\boldsymbol{\eta_{*}},\xi_{n}) \right\vert \rightarrow \lambda.$
	\end{proof}
	
	\begin{thm}\label{thm:mainhyperplane}
		For $f\in C^\infty(\RR^n)$ with compact support, we have
		\begin{equation*}
		\mathcal M (F_{\boldsymbol{\eta_{*}}})(s)=\alpha \Gamma(1-s)\sin\left(\frac {\pi s}\alpha\right)\mathcal M \left[\mathcal F_{\boldsymbol{u}} (\mathcal W_Hf)\right](\boldsymbol{\eta_{*}},s;\alpha),\quad 0<\mathrm{Re}(s)<\alpha
		\end{equation*}
		where
		\begin{equation*}\label{eq:abbreviations2}
		F_{\boldsymbol{\eta_{*}}}(\lambda)=\mathcal{F}f(\boldsymbol{\eta_{*}}, \sqrt{\lambda^{-2}-\left\vert\boldsymbol{\eta_{*}}\right\vert^{2}}) \dfrac{\chi_{\left\vert\boldsymbol{\eta_{*}}\right\vert\le\lambda^{-1}}(\lambda^{-1})}{\lambda^{2}\sqrt{\lambda^{-2}-\left\vert\boldsymbol{\eta_{*}} \right\vert^{2}}}.
		\end{equation*}
		\begin{proof}
			By changing the variables $\lambda\rightarrow\tilde{\lambda}^{-1},$ \eqref{eq:fuwhf} can be represented as
			\begin{equation}\label{eq:WHf FxE} 
			\begin{array}{ll}
			\mathcal{F}_{\boldsymbol{u}}(\mathcal{W}_{H}f)(\boldsymbol{\eta_{*}},t;\alpha)
			&=\dfrac{1}{\pi}\displaystyle\intL_{0}^{\infty}E_{\alpha}\left(-t^{\alpha}\tilde{\lambda}^{-\alpha}\right)\mathcal{F}f(\boldsymbol{\eta_{*}},\sqrt{\tilde{\lambda}^{-2}-\left\vert\boldsymbol{\eta_{*}}\right\vert^{2}}) \dfrac{\chi_{\left\vert\boldsymbol{\eta_{*}}\right\vert\le\tilde{\lambda}^{-1}}(\tilde{\lambda}^{-1})}{\tilde{\lambda}^{3}\sqrt{\tilde{\lambda}^{-2}-\left\vert\boldsymbol{\eta_{*}}\right\vert^{2}}}{\rm d}\tilde{\lambda} \\
			&=\dfrac{1}{\pi}F_{\boldsymbol{\eta_{*}}}\times E(t;\alpha),
			\end{array}
			\end{equation}
			where in the second line, we used the convolution \eqref{eq:Mellinconvol} and \eqref{eq:E}.
			To show that  the Mellin transform of $\mathcal F_{\boldsymbol{u}}(\mathcal W_Hf)$ defined in \eqref{eq:WHf FxE} is well-defined, we  need only check that the Mellin transforms of $F_{\boldsymbol{\eta_{*}}}$ is well-defined, since by  Theorem \ref{thm:mainsphere}  $\mathcal M (E)$ is well-defined for $0<\mathrm{Re}(s)<\alpha$.
			Taking the Mellin transform of $F_{\boldsymbol{\eta_{*}}}$ with respect to $\lambda$, we have
			$\mathcal M\left(F_{\boldsymbol{\eta_{*}}}\right)(s)
			=\intL\half \mathcal{F}f\left(\boldsymbol{\eta_{*}}, \sqrt{\lambda^{-2}-\left\vert\boldsymbol{\eta_{*}}\right\vert^{2}}\right)\frac{\chi_{\left\vert\boldsymbol{\eta_{*}}\right\vert\le\lambda^{-1}}(\lambda^{-1})}{\lambda^{2}\sqrt{\lambda^{-2}-\left\vert\boldsymbol{\eta_{*}}\right\vert^{2}}}\lambda^{s-1}{\rm d}\lambda$.
			We notice that
        \begin{equation*}
        F_{\boldsymbol{\eta_{*}}}(\lambda)=O(\lambda^{\infty}) \ \  \mbox{as} \ \  \lambda \rightarrow 0^{+} \quad \mbox{and} \quad  F_{\boldsymbol{\eta_{*}}}(\lambda)=O(\lambda^{-\infty}) \ \  \mbox{as} \ \ \lambda\rightarrow \infty,
        \end{equation*}
			%\begin{equation*}
			%\begin{array}{ll}
			%\left| \mathcal M_{\rho}\left(F_{\boldsymbol{\eta_{*}}}\right)(s) \right| 
			%&= \ \left|\displaystyle\intL\half (\mathcal{F}f)_{l k}(\tilde{\rho})J_{l+\frac{n-2}2}(\tilde{\rho})\tilde{\rho}^{\frac n2 -s}{\rm d}\tilde{\rho}\right| \\
			%&\le \ \displaystyle\intL\half \left|\mathcal{F}f\left(\boldsymbol{\eta_{*}}, \sqrt{\tilde{\rho}^{2} - \left\vert\boldsymbol{\eta_{*}}\right\vert^{2}}\right)\dfrac{\chi_{\left\vert\boldsymbol{\eta_{*}}\right\vert \le \tilde{\rho} }(\tilde{\rho})}{\sqrt{\tilde{\rho}^{2}-\left\vert \boldsymbol{\eta_{*}} \right\vert^{2}}} \right|\tilde{\rho}^{1-\mathrm{Re}(s)}{\rm d}\tilde{\rho} 
			%&\le \ \displaystyle\intL^{1}_{0} \left| \mathcal{F}f\left(\boldsymbol{\eta_{*}}, \sqrt{\tilde{\rho}^{2} - \left\vert\boldsymbol{\eta_{*}}\right\vert^{2}}\right)\dfrac{\chi_{\left\vert\boldsymbol{\eta_{*}}\right\vert \le \tilde{\rho} }(\tilde{\rho})}{\sqrt{\tilde{\rho}^{2}-\left\vert \boldsymbol{\eta_{*}} \right\vert^{2}}} \right|\tilde{\rho}^{1-\mathrm{Re}(s)}{\rm d}\tilde{\rho} \\ 
			%&+\displaystyle\intL^{\infty}_{1}  \left| \mathcal{F}f\left(\boldsymbol{\eta_{*}}, \sqrt{\tilde{\rho}^{2} - \left\vert\boldsymbol{\eta_{*}}\right\vert^{2}}\right)\dfrac{\chi_{\left\vert\boldsymbol{\eta_{*}}\right\vert \le \tilde{\rho} }(\tilde{\rho})}{\sqrt{\tilde{\rho}^{2}-\left\vert \boldsymbol{\eta_{*}} \right\vert^{2}}} \right|\tilde{\rho}^{1-\mathrm{Re}(s)}{\rm d}\tilde{\rho} \\
			%&\le \infty,
			%\end{array}
			%\end{equation*}
			Therefore $\mathcal M \left(F_{\boldsymbol{\eta_{*}}}\right)$ is well-defined for any $s\in \mathbb C$ and thus the Mellin transform of $\mathcal F_{\boldsymbol{u}}(\mathcal W_Hf)(s)$ is well-defined for $0<\mathrm{Re}(s)<2$.
			Taking the Mellin transform, %on both sides of \eqref{eq:WHf FxE} with \eqref{eq:EofMellin} completes our proof.
			we have
			\begin{equation*}
			\mathcal{M}\left[\mathcal{F}_{\boldsymbol{u}} (\mathcal{W}_{H}f)\right](\boldsymbol{\eta_{*}},s;\alpha) = \dfrac{1}{\pi} \mathcal{M}(F_{\boldsymbol{\eta_{*}}})(s)\mathcal{M} (E)(s;\alpha)=\dfrac{\mathcal{M}(F_{\boldsymbol{\eta_{*}}})(s)}{\alpha\Gamma(1-s)\sin\left(\dfrac{\pi s}{\alpha}\right)}.
			\end{equation*}
		\end{proof}
	\end{thm}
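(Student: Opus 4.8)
The plan is to mirror the argument of Theorem~\ref{thm:mainsphere}, treating the planar case as a one-dimensional Mellin deconvolution. Starting from the integral representation in Lemma~\ref{lem:Fu(WHf)}, I would first substitute $\lambda\to\tilde\lambda^{-1}$ to turn the $\lambda$-integral into a Mellin convolution in the sense of \eqref{eq:Mellinconvol}. With the abbreviation $E(\rho;\alpha)=E_\alpha(-\rho^\alpha)$ from \eqref{eq:E}, this recasts \eqref{eq:fuwhf} as
\begin{equation*}
\mathcal F_{\boldsymbol{u}}(\mathcal W_Hf)(\boldsymbol{\eta_{*}},t;\alpha)=\tfrac1\pi\,F_{\boldsymbol{\eta_{*}}}\times E(t;\alpha),
\end{equation*}
which is precisely \eqref{eq:WHf FxE}; the kernel $F_{\boldsymbol{\eta_{*}}}$ collects all the $\lambda$-dependent factors produced by the substitution.

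Once the convolution structure is in place, the identity follows by applying the Mellin transform to both sides and invoking the multiplicative property $\mathcal M(f\times g)(s)=\mathcal Mf(s)\mathcal Mg(s)$. The Mellin transform of $E$ has already been computed in Theorem~\ref{thm:mainsphere}, namely $\mathcal M(E)(s;\alpha)=\pi/[\alpha\Gamma(1-s)\sin(\pi s/\alpha)]$ on the strip $0<\mathrm{Re}(s)<\alpha$ (equation \eqref{eq:EofMellin}). Substituting this and solving for $\mathcal M(F_{\boldsymbol{\eta_{*}}})(s)$ yields the stated relation, with the factors of $\pi$ cancelling cleanly against the $1/\pi$ prefactor.

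The one step requiring genuine care — and the main obstacle — is justifying that $\mathcal M(F_{\boldsymbol{\eta_{*}}})$ is well-defined and that the product of Mellin transforms is legitimate on the asserted strip. Here I would examine the asymptotics of $F_{\boldsymbol{\eta_{*}}}(\lambda)$ directly. Because $f$ has compact support, $\mathcal Ff$ is rapidly decreasing in its last argument; as $\lambda\to0^{+}$ that argument $\sqrt{\lambda^{-2}-|\boldsymbol{\eta_{*}}|^{2}}\sim\lambda^{-1}\to\infty$, forcing $F_{\boldsymbol{\eta_{*}}}(\lambda)=O(\lambda^{\infty})$, while the characteristic function $\chi_{|\boldsymbol{\eta_{*}}|\le\lambda^{-1}}$ makes $F_{\boldsymbol{\eta_{*}}}(\lambda)=0$ for $\lambda>|\boldsymbol{\eta_{*}}|^{-1}$, giving $O(\lambda^{-\infty})$ at infinity. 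The only delicate point is the integrable square-root singularity of the factor $(\lambda^{-2}-|\boldsymbol{\eta_{*}}|^{2})^{-1/2}$ as $\lambda\uparrow|\boldsymbol{\eta_{*}}|^{-1}$, which is harmless for absolute convergence. These estimates show $\mathcal M(F_{\boldsymbol{\eta_{*}}})$ is in fact entire, so the restriction $0<\mathrm{Re}(s)<\alpha$ in the final formula is inherited entirely from the strip of validity of $\mathcal M(E)$ rather than from $F_{\boldsymbol{\eta_{*}}}$.
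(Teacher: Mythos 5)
Your proposal is correct and follows essentially the same route as the paper: recast \eqref{eq:fuwhf} as the Mellin convolution $\tfrac1\pi F_{\boldsymbol{\eta_{*}}}\times E$ via the substitution $\lambda\to\tilde\lambda^{-1}$, verify the asymptotics $F_{\boldsymbol{\eta_{*}}}(\lambda)=O(\lambda^{\infty})$ at $0^{+}$ and $O(\lambda^{-\infty})$ at $\infty$ so that $\mathcal M(F_{\boldsymbol{\eta_{*}}})$ is entire, and then apply the multiplicative property together with \eqref{eq:EofMellin} to solve for $\mathcal M(F_{\boldsymbol{\eta_{*}}})$ on the strip $0<\mathrm{Re}(s)<\alpha$ dictated by $\mathcal M(E)$. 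In fact you supply slightly more justification than the paper does, by explaining why the decay estimates hold (rapid decay of $\mathcal Ff$, the cutoff $\chi_{\left\vert\boldsymbol{\eta_{*}}\right\vert\le\lambda^{-1}}$) and by noting that the square-root singularity at $\lambda=\left\vert\boldsymbol{\eta_{*}}\right\vert^{-1}$ is integrable.
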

Again, taking the inverse Mellin transform of $\mathcal{M}(F_{\boldsymbol{\eta_{*}}})(s)$, we  reconstruct $F_{lk}$ and $(\mathcal Ff)_{lk}$.
    \begin{cor}\label{cor:F}
    	For $f\in C^\infty(\RR^n)$ with compact support, we reconstruct $(\mathcal{F}f)_{lk}$ from $(\mathcal{W}_{S}f)_{lk}$ by the recovery of $F_{\boldsymbol{\eta_{*}}}$: for $\boldsymbol\eta=(\boldsymbol{\eta_{*}}, \eta_n)\in\mathbb R^{n-1}\times\mathbb R$,	
    	\begin{equation*}
    	\begin{array}{ll}
    		\mathcal{F}f(\boldsymbol\eta)=F_{\boldsymbol{\eta_{*}}}(|\boldsymbol\eta| ^{-1})\dfrac{\eta_n}{|\boldsymbol\eta|^{2}}=\dfrac{\eta_n}{|\boldsymbol\eta|^{2}}\mathcal{M}^{-1}[\alpha\Gamma(1-\cdot)\sin\left(\dfrac{\pi \cdot}{\alpha}\right)\mathcal{M}\left[\mathcal{F}_{\boldsymbol{u}} (\mathcal{W}_{H}f)\right](\boldsymbol{\eta_{*}},\cdot;\alpha)](|\boldsymbol\eta|^{-1}).
\end{array}
    	\end{equation*}		
    \end{cor}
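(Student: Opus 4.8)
The plan is to read $F_{\boldsymbol{\eta_{*}}}$ off Theorem~\ref{thm:mainhyperplane} by a single Mellin inversion, then to undo the reciprocal change of variables built into the definition of $F_{\boldsymbol{\eta_{*}}}$ by evaluating it at the one point $\lambda=|\boldsymbol\eta|^{-1}$, and finally to return $f$ by an inverse Fourier transform. For the inversion I would use that the proof of Theorem~\ref{thm:mainhyperplane} already shows $\mathcal M(F_{\boldsymbol{\eta_{*}}})(s)$ is well defined for all $s\in\mathbb C$ while $\mathcal M[\mathcal F_{\boldsymbol{u}}(\mathcal W_Hf)](\boldsymbol{\eta_{*}},s;\alpha)$ is holomorphic in the strip $0<\mathrm{Re}(s)<\alpha$; hence $\alpha\Gamma(1-s)\sin(\pi s/\alpha)$ times the latter is holomorphic there and the inverse Mellin transform may be taken on any vertical contour inside the strip, giving
\begin{equation*}
F_{\boldsymbol{\eta_{*}}}(\lambda)=\mathcal M^{-1}\Bigl[\alpha\Gamma(1-\cdot)\sin\Bigl(\frac{\pi\cdot}{\alpha}\Bigr)\mathcal M\bigl[\mathcal F_{\boldsymbol{u}}(\mathcal W_Hf)\bigr](\boldsymbol{\eta_{*}},\cdot;\alpha)\Bigr](\lambda).
\end{equation*}

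Next I would substitute $\lambda=|\boldsymbol\eta|^{-1}$ into the definition of $F_{\boldsymbol{\eta_{*}}}$ and solve for $\mathcal Ff(\boldsymbol\eta)$. Writing $\boldsymbol\eta=(\boldsymbol{\eta_{*}},\eta_n)$, we have $\lambda^{-2}=|\boldsymbol\eta|^2=|\boldsymbol{\eta_{*}}|^2+\eta_n^2$, so $\sqrt{\lambda^{-2}-|\boldsymbol{\eta_{*}}|^2}=|\eta_n|$; and since $|\boldsymbol\eta|\ge|\boldsymbol{\eta_{*}}|$ the indicator $\chi_{|\boldsymbol{\eta_{*}}|\le\lambda^{-1}}(\lambda^{-1})$ equals $1$. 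The definition then collapses to $F_{\boldsymbol{\eta_{*}}}(|\boldsymbol\eta|^{-1})=\mathcal Ff(\boldsymbol{\eta_{*}},|\eta_n|)\,|\boldsymbol\eta|^2/|\eta_n|$, so $\mathcal Ff(\boldsymbol{\eta_{*}},|\eta_n|)=F_{\boldsymbol{\eta_{*}}}(|\boldsymbol\eta|^{-1})\,|\eta_n|/|\boldsymbol\eta|^2$, which for $\eta_n>0$ is exactly the displayed formula once $F_{\boldsymbol{\eta_{*}}}(|\boldsymbol\eta|^{-1})$ is replaced by the Mellin-inversion expression above.

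The step that needs care is the sign of $\eta_n$: the square root supplies only $|\eta_n|$, so the substitution recovers $\mathcal Ff$ at $(\boldsymbol{\eta_{*}},|\eta_n|)$ rather than at a signed argument. This is consistent with the evenness of $f$ in $x_n$ used in the proof of Lemma~\ref{lem:Fu(WHf)} (equivalently the usual planar-PAT convention that the source lies on one side of the detector hyperplane and is extended evenly), under which $\mathcal Ff$ is even in the last variable; writing $\eta_n$ rather than $|\eta_n|$ then corresponds to taking $\eta_n>0$ and invoking this symmetry, and an inverse $n$-dimensional Fourier transform recovers $f$ on its support. Finally, I would read the printed $(\mathcal Ff)_{lk}$ and $(\mathcal W_Sf)_{lk}$ in the statement as $\mathcal Ff$ and $\mathcal F_{\boldsymbol{u}}(\mathcal W_Hf)$, the hyperplanar reconstruction using no spherical-harmonic expansion.
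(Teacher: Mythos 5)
Your proposal is correct and follows exactly the route the paper intends for this corollary (which it states without explicit proof): invert the Mellin identity of Theorem~\ref{thm:mainhyperplane} along a contour in the strip $0<\mathrm{Re}(s)<\alpha$ to recover $F_{\boldsymbol{\eta_{*}}}$, then evaluate at $\lambda=|\boldsymbol\eta|^{-1}$ so that $\sqrt{\lambda^{-2}-|\boldsymbol{\eta_{*}}|^{2}}=|\eta_n|$ and the indicator equals $1$, and solve for $\mathcal Ff(\boldsymbol\eta)$. Your added care about the $|\eta_n|$ versus $\eta_n$ sign (resolved by the evenness in the last variable already invoked in Lemma~\ref{lem:Fu(WHf)}) and your reading of the statement's $(\mathcal Ff)_{lk}$, $(\mathcal W_Sf)_{lk}$ as leftover spherical-case notation are both correct clarifications of the paper's wording.
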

	
%%%%%%%%%%%%%%%%%%%%%%%%%%%%%%%%%%%
\section{Conclusions and Summary}
%%%%%%%%%%%%%%%%%%%%%%%%

In this study we provide how to recover the initial function $f$ from the solutions of the fractional wave equation restricted on the sphere and hyperplane.
We summarize both cases as follows:\\
\textbf{Summary} for the spherical case. 
We can recover $f$ from $\mathcal{W}_{S}f$ in the following steps:
\begin{enumerate}
	\item Find $(\mathcal{W}_{S}f)_{lk}$ using the spherical harmonics (see Lemma \ref{lem:Wsfcoeff}.).
	\item Take the Mellin transform of $(\mathcal{W}_{S}f)_{lk},$ and observe  $\mathcal{M} \left[(\mathcal{W}_{S}f_{\alpha})_{lk}(\cdot)\right](s)$ is well-defined for $0<\mathrm{Re}(s)<\alpha$ (see Theorem \ref{thm:mainsphere}.).
	\item From theorem \ref{thm:mainsphere}, we find $\mathcal{M}(F_{lk})$ from $\mathcal{M}\left[(\mathcal{W}_{S}f)_{lk}\right]$.
	\item Taking the inverse Mellin transform, we recover $F_{lk}$ from the Mellin transform $\mathcal{M}(F_{lk})$ (see Corollary \ref{cor:Flk}.).
	%taking the inverse Mellin  transform of $\mathcal M(F_{lk})$, we find $F_{lk}.$  
	\item Next, we find $(\mathcal{F}f)_{lk}$ from $F_{lk}$ and finally get $f$.
\end{enumerate}
\textbf{Summary} for hyperplane case. We can recover $f$ from $\mathcal{W}_{H}f$ using the following steps:
\begin{enumerate}
	\item Take the $n-1$-dimensional Fourier transform of $\mathcal{W}_{H}f$ to get $\mathcal F_{\boldsymbol{u}} (\mathcal W_Hf)$ (see Lemma \ref{lem:Fu(WHf)}).
	\item Take the Mellin transform of $\mathcal{F}_{\boldsymbol{u}}(\mathcal{W}_{H}f),$ and observe that $\mathcal{M}\left[\mathcal{F}_{\boldsymbol{u}}(\mathcal{W}_{H}f)(\cdot)\right]$ is well-defined for $0<\mathrm{Re}(s)<\alpha$ (see Theorem \ref{thm:mainhyperplane}).  
	\item Using Theorem \ref{thm:mainhyperplane} we find $\mathcal{M}(F_{\boldsymbol{\eta_{*}}})$ from $\mathcal{M}\left[\mathcal{F}_{\boldsymbol{u}}(\mathcal W_Hf)\right]$.
	\item Taking the inverse Mellin transform, we recover $F_{\boldsymbol{\eta_{*}}}$ from the Mellin transform $\mathcal M (F_{\boldsymbol{\eta_{*}}})$ (see Corollary \ref{cor:F}.). 
	\item Next, we find $\mathcal Ff$ from $F_{\boldsymbol{\eta_{*}}}$ and finally get $f$.
\end{enumerate}	
%
%\textbf{Author Contributions} Each of the authors contributed to each part of this study equally, all authors read
%and approved the final manuscript.
%
%\textbf{Availability of supporting data} Data sharing is not applicable to this article as no data sets were generated
%or analysed during the current study.
%
%\section*{Declarations}
%\textbf{Conflict of interest} This work does not have any conflicts of interest.
	\section*{Acknowledgements}
This work was supported by the National Research Foundation of Korea grant (MSIP) and by the basic science research program
through the National Research Foundation of Korea (NRF) funded by the Korea government (2018R1D1A3B07041149, NRF-2020R1A4A1018190).
	\bibliographystyle{plain}

\end{document}